\documentclass[12pt]{amsart}
\usepackage{amsmath,amssymb,amsfonts,amsthm}

\usepackage[alphabetic]{amsrefs}
\usepackage{bm}
\usepackage{graphicx}
\usepackage{ascmac}
\usepackage[all]{xy}

\usepackage[colorlinks, linkcolor=blue,anchorcolor=blue,citecolor=green]{hyperref}
\usepackage{graphics,epic}

\usepackage{times}
\usepackage{tikz}
\usetikzlibrary{positioning,arrows.meta}
\usetikzlibrary{matrix,calc}
\usetikzlibrary{decorations.pathmorphing}

\newtheorem{theorem}{Theorem}[section]
\newtheorem*{theorem*}{Theorem}

\newtheorem{lemma}[theorem]{Lemma}
\newtheorem{proposition}[theorem]{Proposition}
\newtheorem{corollary}[theorem]{Corollary}

\newtheorem*{conjecture*}{Conjecture}

\newtheorem{remark}[theorem]{Remark}
\newtheorem{definition}[theorem]{Definition}

\newcommand{\opname}[1]{\operatorname{\mathsf{#1}}}

\renewcommand{\mod}{\opname{mod}\nolimits}

\newcommand{\im}{\opname{im}\nolimits}
\renewcommand{\ker}{\opname{ker}\nolimits}

\newcommand{\id}{\mathbf{1}}

\newcommand{\Ext}{\opname{Ext}}

\begin{document}
	
\title{On higher extensions of quiver representations over $\mathbb{F}_1$}\thanks{This work was supported by the National Natural Science Foundation of China (Grant No. 12571040)}
\author{Changjian Fu}
\author{Liang Yang}
\author{Zhiyuan Zeng}
\address{Department of Mathematics, Sichuan University, Chengdu, 610064 PR China}

\email{changjianfu@scu.edu.cn(Fu)}
\email{malyang@scu.edu.cn(Yang)}	
\email{2697413645@qq.com(Zeng)}

\begin{abstract}
We show that higher extension spaces between finite-dimensional nilpotent $\mathbb{F}_1$-representations maybe  infinite-dimensional, thereby clarifying a misconception in the  literature. Our examples arise from cyclic quivers. In particular, for a cyclic quiver $\Delta_n$, we show that $\Ext^3(-,-)$ vanishes for any pair of finite-dimensional nilpotent $\mathbb{F}_1$-representations of $\Delta_n$, while $\Ext^2(-,-)$ is infinite-dimensional for any pair of simple representations.

\end{abstract}
\maketitle

\section{Introduction}
 Let $\mathbb{F}_1$ be the virtual field, i.e., the ``field" with one element. Szczesny \cite{Sz12} introduced quiver representations 
over $\mathbb{F}_1$ as a degenerated combinatorial model of quiver  representations over a field. Over the past decade, various concepts and techniques from the classical theory  of quiver representations over a field have been developed in the $\mathbb{F}_1$-setting, and the connection between the combinatorics of
 $\mathbb{F}_1$-representations and the representation theory over a field has been extensively explored; see, for example \cites{JS23,JS24,Kleinau25}.

 Let $Q$ be a finite quiver.
 The category $\opname{rep}(Q,\mathbb{F}_1)$ of finite-dimensional $\mathbb{F}_1$-representations of $Q$ behaves in many ways like a non-additive
 version of a module category. In particular, one may adapt Yoneda's construction to define the extension space $\Ext^n(\mathbf{L},\mathbf{N})$ for  any $\mathbf{L},\mathbf{M}\in \opname{rep}(Q,\mathbb{F}_1)$ and any positive integer $n$. Note that $\Ext^n(\mathbf{L},\mathbf{N})$ is not a group but rather a pointed set, equivalently, an $\mathbb{F}_1$-vector space.
 
It follows directly from the definition that $\Ext^1(\mathbf{L},\mathbf{N})$ is a finite pointed set whenever $\mathbf{L}$ and $\mathbf{N}$ are finite-dimensional representations, which is essential for the definition of Hall algebra of $\opname{rep}(Q,\mathbb{F}_1)$. However, the situation for higher extension spaces is far from clear, and there exists a common misconception in the literature regarding their finiteness; see, for instance, \cite{Sz12}*{Section 4}. The aim of this short note is to show that the finiteness does not hold in general. In particular, we exhibit that the second  extension space $\Ext^2(\mathbf{L},\mathbf{N})$ is infinite-dimensional for any pair of simple representations over a cyclic quiver, thereby clarifying the misconception.

\section{Preliminary}
\subsection{Quiver representations over \texorpdfstring{$\mathbb{F}_1$}{F1}}
In this section, we recall the basic definitions and properties of quiver representations over the virtual field $\mathbb{F}_1$.  We follow \cites{Sz12, JS23} and refer to \cite{Sz12} for unexplained definitions related to representations of quivers over $\mathbb{F}_1$.

A finite-dimensional {\it $\mathbb{F}_1$-vector space} is a finite pointed set $V:=(V,0_V)$. We say that $V$ is of dimension $\dim V:=|V|-1$. An {\it $\mathbb{F}_1$-linear map} from $V:=(V,0_V)$ to $W:=(W,0_W)$  is a pointed function $f:V\to W$  such that $f|_{V\backslash f^{-1}(0_W)}$ is an injection.  We denote by $\mathbf{k}$ the one-dimensional $\mathbb{F}_1$-vector space.

A {\it quiver} is a quadruple $Q=(Q_0,Q_1,s,t)$ consisting of a set of vertices $Q_0$, a set of arrows $Q_1$, and two maps $s,t:Q_1\to Q_0$ that assign to each arrow $\alpha\in Q_1$ its source $s(\alpha)$ and target $t(\alpha)$.
The quiver $Q$ is {\it finite} if both $Q_0$ and $Q_1$ are finite sets.

Let $Q$ be a finite quiver. A {\it representation} of $Q$ over $\mathbb{F}_1$ is a collection \[\mathbf{M}=(M_i, M_\alpha)_{i\in Q_0,\alpha\in Q_1},\] where
\begin{itemize}
    \item $M_i$ is a finite-dimensional $\mathbb{F}_1$-vector space for each vertex $i\in Q_0$;
    \item $M_\alpha:M_{s(\alpha)}\to M_{t(\alpha)}$ is an $\mathbb{F}_1$-linear map for each arrow $\alpha\in Q_1$.
\end{itemize}
A representation $\mathbf{M}=(M_i,M_\alpha)$ is {\it nilpotent}, if there exists $N\geq 0$ such that for any $n\geq N$, $M_{\alpha_n}\circ\cdots\circ M_{\alpha_1}=0$ for any path $\alpha_1\alpha_2\cdots \alpha_n$ in $Q$.

Let $\mathbf{M}=(M_i,M_\alpha)$ and $\mathbf{N}=(N_i,N_\alpha)$ be representations of $Q$ over $\mathbb{F}_1$. A morphism $\Phi:\mathbf{M}\to \mathbf{N}$ is a collection of $\mathbb{F}_1$-linear maps $\Phi=(\phi_i)_{i\in Q_0}$, where $\phi_i:M_i\to N_i$, such that the following diagram commutes for each arrow $\alpha:i\to j\in Q_1$:
\[
\xymatrix{M_i\ar[r]^{M_\alpha}\ar[d]^{\phi_i}&M_j\ar[d]^{\phi_j}\\
 N_i\ar[r]^{N_\alpha}&N_j.}
\]
The kernel $\ker \Phi$ and image $\opname{im} \Phi$ of $\Phi$ can be defined as usual, which are subrepresentations of $\mathbf{M}$ and $\mathbf{N}$ respectively (cf. \cite{Sz12}*{Section 4}).

Let $\opname{rep}(Q,\mathbb{F}_1)$ be the category of representations of $Q$ over $\mathbb{F}_1$ and $\opname{rep}(Q,\mathbb{F}_1)_{\rm nil}$ the full subcategory of $\opname{rep}(Q,\mathbb{F}_1)$ consisting of nilpotent representations. Although the category $\opname{rep}(Q,\mathbb{F}_1)$ is not additive, it shares many good properties with the category of representations of $Q$ over a field. In particular, both the
Jordan–H\"{o}lder and Krull–Schmidt theorems hold in $\opname{rep}(Q,\mathbb{F}_1)$; see \cite{Sz12}.

\subsection{Yoneda's Extension} 
In this section, we recall Yoneda's extension for $\opname{rep}(Q,\mathbb{F}_1)$, following \cite{FRY24}.

Denote by $\mathbf{0}$ the zero object of $\opname{rep}(Q,\mathbb{F}_1)$.
An exact sequence of length $n+2$ of $\opname{rep}(Q,\mathbb{F}_1)$ is a sequence of morphism
\[
\xymatrix{\mathbf{L}\ \ar[r]^{\alpha_0}&\mathbf{M}_1\ar[r]^{\alpha_1}&\mathbf{M}_2\ar[r]^{\alpha_2}&\cdots\ar[r]^{\alpha_{n-1}}&\mathbf{M}_n\ar[r]^{\alpha_n}&\mathbf{N}}
\]
 such that 
 \begin{itemize}
     \item $\ker \alpha_{i+1}=\opname{im} \alpha_i$ for $0\leq i\leq n-1$;
     \item $\alpha_0$ is a monomorphism, equivalently, $\ker\alpha_0=\mathbf{0}$;
     \item $\alpha_n$ is an epimorphism, equivalently, $\opname{im} \alpha_n=\mathbf{N}$.
 \end{itemize}  We will use $\rightarrowtail$ (resp. $\twoheadrightarrow$) to indicate a morphism is a monomorphism  (resp. an epimorphism).

Let 
\[\xymatrix{\epsilon: &\mathbf{L}\ \ar@{>->}[r]^{\alpha_0}&\mathbf{M}_1\ar[r]^{\alpha_1}&\mathbf{M}_2\ar[r]^{\alpha_2}&\cdots\ar[r]^{\alpha_{n-1}}&\mathbf{M}_n\ar@{->>}[r]^{\alpha_n}&\mathbf{N}}
\]
and 
\[\xymatrix{\epsilon': &\mathbf{U}\ \ar@{>->}[r]^{\beta_0}&\mathbf{N}_1\ar[r]^{\beta_1}&\mathbf{N}_2\ar[r]^{\beta_2}&\cdots\ar[r]^{\beta_{n-1}}&\mathbf{N}_n\ar@{->>}[r]^{\beta_n}&\mathbf{V}}
\]
be  exact sequences of length $n+2$  in $\opname{rep}(Q,\mathbb{F}_1)$. 
Denote by $\epsilon\oplus \epsilon'$ the following exact sequence 
\[
\xymatrix{\mathbf{L}\oplus \mathbf{U}\ \ar@{>->}[r]^-{\tiny  \begin{pmatrix}\alpha_0&0\\0&\beta_0\end{pmatrix}}& \mathbf{M}_1\oplus \mathbf{N}_1\ar[r]&\cdots\ar[r]&\mathbf{M}_n\oplus \mathbf{N}_n\ar@{->>}[r]^-{\tiny\begin{pmatrix}\alpha_n&0\\0&\beta_n\end{pmatrix}}&\mathbf{N}\oplus \mathbf{V},}
\]
which is called the direct sum of $\epsilon$ and $\epsilon'$.

Let $\mathbb{E}^n(\mathbf{N},\mathbf{L})$ be the set of exact sequences of length $n+2$ that start at $\mathbf{L}$ and end at $\mathbf{N}$. An element in $\mathbb{E}^n(\mathbf{N},\mathbf{L})$ is called an {\it $n$-extension} of $\mathbf{N}$ by $\mathbf{L}$. We say that an element in $\mathbb{E}^n(\mathbf{N},\mathbf{L})$ is non-zero if at least one of its $n+2$ components is non-zero.
\begin{definition}
A nonzero exact sequence $\epsilon\in \mathbb{E}^n(N,L)$ is called primitive if $\epsilon$ has no non-trivial direct sum decomposition, that is, if $\epsilon= \epsilon_1\oplus \epsilon_2$ for exact sequences $\epsilon_1$ and $\epsilon_2$ of length $n+2$, then $\epsilon_1= 0$ or $\epsilon_2= 0$.
\end{definition}
\begin{remark}
Let $\mathbf{L},\mathbf{N}\in \opname{rep}(Q,\mathbb{F}_1)$. Every nonzero exact sequence in $\mathbb{E}^n(\mathbf{N},\mathbf{L})$ is a finite direct sum of primitive exact sequences.
\end{remark}
Recall that the $n$-extensions $\epsilon$ and $\epsilon'$ of $\mathbf{N}$ by $\mathbf{L}$ satisfy the relation $\epsilon\leadsto\epsilon'$ if there is a commutative diagram
\[
\xymatrix{\epsilon: &\mathbf{L}\ \ar@{=}[d]\ar@{>->}[r]&\mathbf{E}_n\ar[d]^{f_n}\ar[r]&\cdots\ar[r]&\mathbf{E}_1\ar[d]^{f_1}\ar@{->>}[r] &\mathbf{N}\ar@{=}[d]\\
\epsilon':&\mathbf{L}\ \ar@{>->}[r]&\mathbf{E}_n'\ar[r]&\cdots\ar[r]&\mathbf{E}_1'\ar@{->>}[r] &\mathbf{N}.
}
\]
 Moreover, if $f_1,\dots, f_n$ are isomorphisms, we say that $\epsilon$ is isomorphic to $\epsilon'$, and denote it by $\epsilon\cong \epsilon'$.
The relation $\leadsto$  generates an equivalence relation on $\mathbb{E}^n(\mathbf{N},\mathbf{L})$. We denote by $[\epsilon]$ the equivalence class of the $n$-extension $\epsilon$, and by $\Ext^n(\mathbf{N},\mathbf{L})$ the set of all equivalence classes of $n$-extensions of $\mathbf{N}$ by $\mathbf{L}$. The set $\Ext^n(\mathbf{N},\mathbf{L})$ is a pointed set with
\[0=[\xymatrix{\mathbf{L}\ar@{=}[r]&\mathbf{L}\ar[r]&\mathbf{0}\ar[r]&\cdots\ar[r]&\mathbf{0}\ar[r]&\mathbf{N}\ar@{=}[r]&\mathbf{N}}]
\] for $n\geq 2$ and 
\[\xymatrix{0=[\mathbf{L}\ \ar@{>->}[r]^-{\tiny\begin{pmatrix}1_{\mathbf{L}}&0\end{pmatrix}}&\mathbf{L}\oplus \mathbf{N}\ar@{->>}[r]^-{\tiny\begin{pmatrix}0\\ 1_{\mathbf{N}}\end{pmatrix}}&\mathbf{N}]}
\]
for $n=1$. 
The following is obvious, see \cite{FRY24}*{Lemma 2.4}.
\begin{lemma}\label{l:equiv=0}
\begin{itemize}
    \item[(1)] For any exact sequences $\epsilon$ and $\delta$ of length $n+2$. If $[\epsilon]=0$ and $[\delta]=0$, then $[\epsilon\oplus \delta]=0$.
    \item[(2)] For any $n\geq 1$ and $\mathbf{L}\in \opname{rep}(Q,\mathbb{F}_1)$, $\Ext^n(\mathbf{0},\mathbf{L})=0=\Ext^n(\mathbf{L},\mathbf{0})$. If $\Ext^n(\mathbf{L},-)=0$, then $\Ext^{n+1}(\mathbf{L},-)=0$.
\end{itemize}

\end{lemma}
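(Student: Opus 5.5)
For part (1), I will build a single chain of $\leadsto$-moves from $\epsilon\oplus\delta$ to the zero extension, using the fact that a direct sum of commutative ladders is again a commutative ladder. Since $[\epsilon]=0$, there is a zigzag $\epsilon=\epsilon_0,\epsilon_1,\dots,\epsilon_r=0$. Each consecutive pair is related by $\leadsto$ in one direction or the other, via a ladder of morphisms that is the identity on both end terms. Likewise there is a zigzag $\delta=\delta_0,\dots,\delta_s=0$. After padding the shorter zigzag with identity steps (using $\epsilon_i\leadsto\epsilon_i$ via identities), I may assume $r=s$. I will not try to align the directions of the arrows. Instead, I first move the first coordinate and then the second: the zigzag $\epsilon_i\oplus\delta$ for $i=0,\dots,r$, followed by $0\oplus\delta_j$ for $j=0,\dots,s$. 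Each step $\epsilon_i\leadsto\epsilon_{i+1}$ (or its reverse) gives $\epsilon_i\oplus\delta\leadsto\epsilon_{i+1}\oplus\delta$ (or its reverse) by taking the direct sum of the ladder with the identity ladder on $\delta$.

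The end terms are preserved because the direct sum of identities is the identity on $\mathbf{L}\oplus\mathbf{U}$ and on $\mathbf{N}\oplus\mathbf{V}$. I also need the direct sum of exact sequences to be exact and the direct sum of morphisms to be a morphism. Both hold because kernels, images, monomorphisms and epimorphisms in $\opname{rep}(Q,\mathbb{F}_1)$ are computed componentwise on wedge sums of pointed sets. Finally, the direct sum of the two zero extensions is the zero extension of $\mathbf{N}\oplus\mathbf{V}$ by $\mathbf{L}\oplus\mathbf{U}$, and this is immediate from the explicit formulas for $0$, both for $n\ge2$ and for $n=1$.

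For part (2), take $\epsilon\in\mathbb{E}^n(\mathbf{0},\mathbf{L})$. For $n\ge2$ I would write down a ladder $\epsilon\leadsto 0$ with components $\mathbf{E}_i\to\mathbf{0}$ for the middle terms except the first, and $\mathbf{E}_n\to\mathbf{L}$ for the first. The point is that exactness at the last stage forces the final map to have kernel everything. I then need a retraction $\mathbf{E}_n\to\mathbf{L}$ that works. Rather than that, it is cleaner to map in the other direction, $0\leadsto\epsilon$. I take $\mathbf{L}\to\mathbf{E}_n$ to be the monomorphism $\alpha_0$ and all other components to be zero maps from $\mathbf{0}$. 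Commutativity then reduces to $\alpha_0=\alpha_0\circ 1$ and to the vanishing of compositions, both of which are clear. The argument for $\Ext^n(\mathbf{L},\mathbf{0})$ is dual: I map $\epsilon\leadsto0$ using the epimorphism $\mathbf{E}_1\to\mathbf{L}$ and zero maps elsewhere. For $n=1$ I do the same with $\mathbf{L}\oplus\mathbf{0}=\mathbf{L}$, where the sequence is forced to be isomorphic to the trivial one.

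For the last claim I use splicing. Given $\epsilon\in\mathbb{E}^{n+1}(\mathbf{L},\mathbf{M})$, factor it at $\mathbf{K}=\opname{im}\alpha_1$ into an $n$-extension $\epsilon''$ of $\mathbf{L}$ by $\mathbf{K}$ and a short sequence $\mathbf{M}\rightarrowtail\mathbf{E}\twoheadrightarrow\mathbf{K}$. Since $\Ext^n(\mathbf{L},\mathbf{K})=0$, the extension $\epsilon''$ is connected to $0$ by a zigzag. I then show that splicing the fixed short sequence is compatible with $\leadsto$: a ladder between $n$-extensions of $\mathbf{L}$ by $\mathbf{K}$ extends, by the identity on $\mathbf{E}$, to a ladder between the spliced sequences. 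This reduces $\epsilon$ to the splice with the zero $n$-extension, which contains a $\mathbf{0}$ term (for $n\ge2$). A final explicit ladder, built as in the previous paragraph, connects that to $0$.

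The main obstacle is this last step. I expect the bookkeeping to be hardest for $n=1$, where the zero element has the different form $\mathbf{K}\rightarrowtail\mathbf{K}\oplus\mathbf{L}\twoheadrightarrow\mathbf{L}$ and one must write the connecting ladder by hand.
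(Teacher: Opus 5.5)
Your proof is correct. The paper does not prove this lemma at all: it calls it obvious and refers to Lemma 2.4 of FRY24. Your proposal supplies the routine verification.
\begin{itemize}
\item For (1), you take direct sums of ladders with the identity ladder.
\item For the vanishing at $\mathbf{0}$, you use an explicit ladder $0\leadsto\epsilon$ built from $\alpha_0$ (dually, from the final epimorphism).
\item For the dimension shift, you splice at $\mathbf{K}=\opname{im}\alpha_1$.
\end{itemize}

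Two small clean-ups concern the last step. Write $p\colon\mathbf{E}\twoheadrightarrow\mathbf{K}$ for the corestriction of $\alpha_1$, and $\sigma$ for the splice of $\mathbf{M}\rightarrowtail\mathbf{E}\twoheadrightarrow\mathbf{K}$ with the zero $n$-extension.

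First, for $n=2$ the zero $2$-extension $\mathbf{K}=\mathbf{K}\xrightarrow{0}\mathbf{L}=\mathbf{L}$ has no $\mathbf{0}$ term, contrary to your remark. This is harmless. For every $n\ge 2$, take the ladder from the zero $(n+1)$-extension to $\sigma$ with these components:
\begin{itemize}
\item $1_{\mathbf{M}}$ and $\alpha_0$ at the left end;
\item zero maps out of the $\mathbf{0}$ terms;
\item $1_{\mathbf{L}}$ at the last middle term.
\end{itemize}
Its only nontrivial square commutes because $p\circ\alpha_0=0$.

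Second, the case $n=1$, which you flag as the main obstacle, is just as short. Here $\sigma$ is $\mathbf{M}\rightarrowtail\mathbf{E}\to\mathbf{K}\oplus\mathbf{L}\twoheadrightarrow\mathbf{L}$, where the middle map is $p$ followed by the inclusion of $\mathbf{K}$. Take the ladder from $\mathbf{M}=\mathbf{M}\xrightarrow{0}\mathbf{L}=\mathbf{L}$ to $\sigma$ whose middle components are $\alpha_0$ and the summand inclusion $\mathbf{L}\to\mathbf{K}\oplus\mathbf{L}$. It commutes, again because $p\circ\alpha_0=0$.
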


% \begin{definition}
%    The category $\opname{rep}(Q,\mathbb{F}_1)$ is of global dimension $n$ if $\Ext^n(-,-)\neq 0$ and $\Ext^{n+1}(-,-)=0$. In this case, we denote it by $gl.dim \opname{rep}(Q,\mathbb{F}_1)=n$. It is hereditary if $gl.dim \opname{rep}(Q,\mathbb{F}_1)\leq 1$. 
% \end{definition}

% \begin{definition}
%     A representation $P\in \opname{rep}(Q,\mathbb{F}_1)$ is projective if for any epimorphism $f:M\twoheadrightarrow N$  and any morphism $g:P\to N$ of $\opname{rep}(Q,\mathbb{F}_1)$, there is a morphism $h:P\to M$ such that $g=f\circ h$.
% \end{definition}

% The following is clear.
% \begin{lemma}
% Let $P\in \opname{rep}(Q,\mathbb{F}_1)$ be a projective representation.  Then $\Ext^1(P,-)=0$ and each direct summand of $P$ is projective.

% \end{lemma}
% \begin{remark}
% In general, the condition $\Ext^1(P,-)=0$ does not imply that $P$ is projective and the direct sum of two projective representations is not projective, see example in Section \ref{s:non-linear}. 
% \end{remark}

\section{Higher extensions over cyclic quivers}
\subsection{The category \texorpdfstring{$\opname{rep}(\Delta_n,\mathbb{F}_1)_{\rm nil}$}{rep(Delta,F1)}}
Throughout this section, let $\Delta_n$ be the cyclic quiver with $n$ vertices, where $n\geq 1$. We label the vertex set by $\{1,2,\ldots, n\}$ such that the arrows are precisely from vertex $i$ to $i+1$ (taken modulo $n$)\footnote{It is convenient to denote a vertex  by an integer larger than $n$. In this case, it should be read as modulo $n$.}. 
Recall that $\opname{rep}(\Delta_n,\mathbb{F}_1)_{\rm nil}$ is the category of finite-dimensional nilpotent representations of $\Delta_n$ over $\mathbb{F}_1$, which is essentially identical to that over a field.

For each  vertex $k$ of $\Delta_n$ and a positive integer $r$, let $V=\{0,v_k,v_{k+1},\cdots, v_{k+r-1}\}$ be an $r$-dimensional $\mathbb{F}_1$-vector space.
Let $\mathbf{I}_{[k,r]}:=(V_i,V_\alpha)$ be the $\mathbb{F}_1$-representation of $\Delta_n$, where
\begin{itemize}
    \item $V_j=\{0,v_i\mid  i-j\equiv 0 (\mod n), k\leq i\leq k+r-1\}$;
    \item if $\alpha:j\to j+1$, then $V_\alpha(v_i)=v_{i+1}$ for $v_i\in V_j$. Here we use the convention $v_{k+r}=0$.
\end{itemize}
The following proposition summarizes certain basic structural results on $\opname{rep}(\Delta_n,\mathbb{F}_1)$, which can be deduced from the definitions and the construction of $\mathbf{I}_{[k,r]}$; see \cite{Sz12}*{Section 11}.
\begin{proposition}\label{prop:structure-nil-ind}
    \begin{itemize}
        \item[(1)] The set $\{\mathbf{I}_{[k,r]}\mid k\in (\Delta_n)_0, 1\leq r\in \mathbb{N}\}$ forms a complete set of non-isomorphic finite-dimensional indecomposable nilpotent representations of $\Delta_n$.
        \item[(2)] The set $\{\mathbf{I}_{[k,1]}\mid k\in (\Delta_n)_0\}$ are precisely the simple objects in $\opname{rep}(\Delta_n,\mathbb{F}_1)_{\rm nil}$.
        \item[(3)] Each indecomposable representation $\mathbf{I}_{[k,r]}$ has a unique composition series whose composition factors are $\mathbf{I}_{[k+r-1,1]}, \mathbf{I}_{[k+r-2,1]},\ldots, \mathbf{I}_{[k,1]}$. In particular,  its unique top is $\mathbf{I}_{[k,1]}$,  while its unique socle is $I_{[k+r-1,1]}$.
        \item[(4)] Each nonzero subrepresention or quotient representation of $\mathbf{I}_{[k,r]}$ is also indecomposable.
        \item[(5)] There is an epimorphism from $\mathbf{I}_{[i,l]}$ to $\mathbf{I}_{[k,r]}$ if and only if $i=k$ and $l\geq r$. Dually, there is a monomorphism from $\mathbf{I}_{[i,l]}$ to $\mathbf{I}_{[k,r]}$ if and only if $i+l-1\equiv k+r-1(\mod n)$ and $l\leq r$. Moreover, in both cases, the morphism is unique.
        \item[(6)] Let $\mathbf{L},\mathbf{M},\mathbf{N}$ be indecomposable representations in $\opname{rep}(\Delta_n,\mathbb{F}_1)$, $f:\mathbf{L}\to \mathbf{M}$ and $g:\mathbf{M}\to \mathbf{N}$ be morphisms. If $g\circ f$ is injective, then both $f$ and $g$ are injective. If $g\circ f$ is surjective, then  both $f$ and $g$ are surjective.
    \end{itemize}
\end{proposition}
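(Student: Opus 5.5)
The plan is to reduce everything to a combinatorial description of $\mathbb{F}_1$-representations of $\Delta_n$ by coefficient graphs. A representation $\mathbf{M}$ gives a finite set $B=\bigsqcup_{j}(M_j\setminus\{0\})$ together with a partial map $a:B\to B$, namely $a(x)=M_\alpha(x)$ whenever this is nonzero. Because each $M_\alpha$ is $\mathbb{F}_1$-linear, $a$ is injective on its domain. Hence the graph with vertex set $B$ and edges $x\to a(x)$ has in-degree and out-degree at most $1$ at every vertex. Its connected components are therefore oriented paths or oriented cycles, and nilpotency rules out cycles.

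A direct sum of representations corresponds to a disjoint union of such graphs. So $\mathbf{M}$ is indecomposable exactly when its graph is a single path $x,a(x),\dots,a^{r-1}(x)$ with $x\in M_k$. Sending $a^m(x)\mapsto v_{k+m}$ gives an isomorphism with $\mathbf{I}_{[k,r]}$. Different pairs $(k,r)$ give non-isomorphic representations, because the dimension and the vertex of the unique basis element outside the image of $a$ are invariants. This proves (1).

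Next I would describe the subrepresentations of $\mathbf{I}_{[k,r]}$. A subrepresentation is a pointed subset closed under $a$, so it has the form $\{0,v_j,v_{j+1},\dots,v_{k+r-1}\}$ for some $k\le j\le k+r-1$. These subsets form a chain, and the successive quotients are one-dimensional. This gives the unique composition series in (3), with top $\mathbf{I}_{[k,1]}$ and socle $\mathbf{I}_{[k+r-1,1]}$. Quotients of $\mathbf{I}_{[k,r]}$ collapse such a tail to $0$, which leaves $\{v_k,\dots,v_{j-1}\}$, again a single path. Together with the description of subrepresentations this gives (4). For (2), every nonzero nilpotent representation contains some $x\neq 0$ with $a(x)$ undefined, and $\{0,x\}$ is a subrepresentation. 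Hence the simple objects are exactly the one-dimensional representations $\mathbf{I}_{[k,1]}$.

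For (5), a morphism $f:\mathbf{I}_{[i,l]}\to\mathbf{I}_{[k,r]}$ is determined by $f(v_i)$, since commutativity forces $f(v_{i+m})=a^m f(v_i)$. If $f$ is an epimorphism, then $v_k$ must lie in the image. But $v_k$ is not in the image of $a$, so $f(v_i)=v_k$, which gives $i=k$ and, counting dimensions, $l\ge r$. Conversely $v_{k+m}\mapsto v_{k+m}$, with $v_{k+m}\mapsto 0$ for $m\ge r$, is a well-defined epimorphism. Dually, a monomorphism must send the socle element $v_{i+l-1}$ to a basis element killed by $a$, and the only such element is $v_{k+r-1}$. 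This determines $f$ and forces the residue condition and $l\le r$. In both cases uniqueness follows because $f(v_i)$ is forced.

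For (6), suppose first that $g\circ f$ is injective. Then $\ker f\subseteq\ker(g\circ f)=0$, so $f$ is injective. If $\ker g\neq\mathbf{0}$, then by (3) it contains the socle element of $\mathbf{M}$. Since $\operatorname{im} f$ is a nonzero subrepresentation of $\mathbf{M}$, it also contains that socle element. Then $g\circ f$ kills a nonzero element of $\mathbf{L}$, a contradiction. Dually, suppose $g\circ f$ is surjective. Then $g$ is clearly surjective. If $\operatorname{im} f\neq\mathbf{M}$, then $\operatorname{im} f$ lies in $a(\mathbf{M})$, hence $g(\operatorname{im} f)\subseteq a(\mathbf{N})$ because $g\circ a=a\circ g$. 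This image misses the top element of $\mathbf{N}$, a contradiction.

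The main obstacle is the first step: setting up the dictionary between representations and path/cycle graphs carefully. The points that need care are that $\mathbb{F}_1$-linearity gives in-degree at most $1$, and that direct sums in $\operatorname{rep}(\Delta_n,\mathbb{F}_1)$ really are disjoint unions of graphs. After that, all the remaining items are short checks on a single path.
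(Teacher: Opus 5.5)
Your argument is correct and follows the route the paper intends. The paper gives no proof of its own; it only says the items follow from the definitions and the construction of $\mathbf{I}_{[k,r]}$, with a reference to Szczesny. Your dictionary between $\mathbb{F}_1$-representations of $\Delta_n$ and finite graphs of in- and out-degree at most one, with nilpotent indecomposables corresponding to single paths, is how that deduction goes.

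One thing to add: item (6) is stated in $\opname{rep}(\Delta_n,\mathbb{F}_1)$, not in $\opname{rep}(\Delta_n,\mathbb{F}_1)_{\rm nil}$, so you should also handle the non-nilpotent (cycle-type) indecomposables. This is immediate. Any morphism between a path-type and a cycle-type indecomposable, in either direction, is zero, and any nonzero morphism between two cycle-type ones is an isomorphism. So if $g\circ f$ is injective or surjective (hence nonzero), either all three objects are cycle-type and $f,g$ are isomorphisms, or all three are nilpotent and your argument applies.
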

We also have a splitting lemma as \cite{FRY24}*{Lemma 3.2}. Here we give a structural proof.
\begin{lemma}\label{l:decom-epi}
Let $f:\mathbf{M}\to \mathbf{N}$ be an epimorphism. Let $\mathbf{N}_1,\mathbf{N}_2$ be subrepresentations of $\mathbf{N}$ such that $\mathbf{N}=\mathbf{N}_1\oplus \mathbf{N}_2$ and  $\mathbf{N}_1\cong \mathbf{I}_{[k,r]}$ for some $k$ and $r$. Then there exist subrepresentations $\mathbf{M}_1,\mathbf{M}_2$ of $\mathbf{M}$ such that 
\begin{enumerate}
\item $\mathbf{M}=\mathbf{M}_1\oplus \mathbf{M}_2$;
\item $\mathbf{M}_1\cong \mathbf{I}_{[k,l]}$ for some $l\geq r$;
\item $f$ can be rewritten as $\mathbf{M}_1\oplus \mathbf{M}_2\xrightarrow{\tiny\begin{pmatrix}f_1&0\\ 0&f_2\end{pmatrix}}\mathbf{N}_1\oplus \mathbf{N}_2$, where $f_1$ is the unique epimorphism from $\mathbf{I}_{[k,l]}$ to $\mathbf{I}_{[k,r]}$ and $f_2$ is an epimorphism.
\end{enumerate}
\end{lemma}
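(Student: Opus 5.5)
Decompose $\mathbf{M}$ into indecomposables and use the top of $\mathbf{N}_1$ to single out the summand $\mathbf{M}_1$; the argument is a structural analysis of how $f$ acts on pointed bases. By Proposition~\ref{prop:structure-nil-ind}(1) and Krull--Schmidt, write $\mathbf{M}=\bigoplus_{j}\mathbf{M}^{(j)}$ with each $\mathbf{M}^{(j)}\cong \mathbf{I}_{[k_j,l_j]}$, and fix the basis $\{v^{(j)}_i\}$ coming from the construction of $\mathbf{I}_{[k_j,l_j]}$. Since $f$ is $\mathbb{F}_1$-linear and commutes with the arrow maps, it sends each basis vector either to $0$ or to a basis vector of $\mathbf{N}$. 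Nonzero images of basis vectors are distinct, because $f$ is injective off $f^{-1}(0)$.

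Let $w\in N_k$ be the generator (top vector) of $\mathbf{N}_1\cong\mathbf{I}_{[k,r]}$. Surjectivity of $f$ gives a basis vector $v=v^{(j)}_i\in M_k$ with $f(v)=w$. I claim $v$ is the top vector $v^{(j)}_{k_j}$ of its summand. Otherwise $v=\alpha(u)$ for some basis vector $u$ one arrow earlier in the same string. Then $f(u)\neq0$, since $\alpha f(u)=w\neq 0$, and $\alpha(f(u))=w$. But $w$ is a top of a direct summand of $\mathbf{N}$, so no basis vector of $\mathbf{N}$ maps to $w$ under an arrow. This is a contradiction, so $k_j=k$ (modulo $n$).

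Set $\mathbf{M}_1:=\mathbf{M}^{(j)}\cong\mathbf{I}_{[k,l]}$ and $\mathbf{M}_2:=\bigoplus_{j'\neq j}\mathbf{M}^{(j')}$. Since $f$ commutes with the arrows, it maps the string $v,\alpha v,\alpha^2v,\dots$ onto $w,\alpha w,\dots$ and to $0$ afterwards. Hence $f(\mathbf{M}_1)=\mathbf{N}_1$ and $f|_{\mathbf{M}_1}$ is an epimorphism $\mathbf{I}_{[k,l]}\to\mathbf{I}_{[k,r]}$. Proposition~\ref{prop:structure-nil-ind}(5) then gives $l\ge r$ and identifies $f_1$ as the unique such epimorphism.

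It remains to show $f(\mathbf{M}_2)\subseteq\mathbf{N}_2$; this is the main obstacle, and the choice of $\mathbf{M}_1$ may have to be adjusted to achieve it. Suppose a basis vector $u$ of $\mathbf{M}_2$ has $f(u)=\alpha^s w$ for some $s$. By injectivity off the kernel, $\alpha^s w$ must not also be the image of $\alpha^s v$, so $\alpha^s v$ lies in $\ker f$ and $s\ge l$. This is impossible, because every vector $\alpha^s w$ with $s<r\le l$ is already hit by $\alpha^s v$. So every nonzero $f(u)$ avoids $\mathbf{N}_1\setminus\{0\}$ and lies in $\mathbf{N}_2$, since each basis vector of $\mathbf{N}$ lies in exactly one of $\mathbf{N}_1,\mathbf{N}_2$. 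Thus $f_2:=f|_{\mathbf{M}_2}:\mathbf{M}_2\to\mathbf{N}_2$ is well defined. It is surjective because every basis vector of $\mathbf{N}_2$ has a preimage, and that preimage cannot lie in $\mathbf{M}_1$, whose nonzero images lie in $\mathbf{N}_1$. Hence $f=\begin{pmatrix}f_1&0\\0&f_2\end{pmatrix}$ with $f_2$ an epimorphism, as required.
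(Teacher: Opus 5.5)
Your proof is correct and is essentially the paper's: decompose $\mathbf{M}$ by Krull--Schmidt, single out the one indecomposable summand that meets $\mathbf{N}_1$, rule out a second such summand because $f$ is injective away from its kernel, and finish with Proposition~\ref{prop:structure-nil-ind}(5). The paper finds that summand via the socle of $\mathbf{N}_1$ (every nonzero map into $\mathbf{I}_{[k,r]}$ hits it) rather than via a preimage of the top, and it leaves implicit the vanishing of the off-diagonal components that you check explicitly; in your version no adjustment of $\mathbf{M}_1$ is ever needed, and you can drop the detour through ``$s\ge l$'', since $f(u)=\alpha^s w=f(\alpha^s v)\neq 0$ is already the contradiction.
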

\begin{proof}
    According to Proposition \ref{prop:structure-nil-ind} (3), if $g:\mathbf{I}_{[i,l]}\to \mathbf{I}_{[k,r]}$ is a nonzero morphism, then the unique socle of $\mathbf{I}_{[k,r]}$ belongs to the image $\im g$. It follows that a map $\mathbf{I}_{[i,l]}\oplus \mathbf{I}_{[j,m]}\xrightarrow{(g_1,g_2)}\mathbf{I}_{[k,r]}$ can not be a morphism whenever both $g_1$ and $g_2$ are nonzero.
    
    Applying the Krull-Schmidt theorm to $\mathbf{M}$, we may write $\mathbf{M}=\mathbf{L}_1\oplus \cdots\oplus \mathbf{L_t}$, where each $\mathbf{L}_i$ is indecomposable. Denote by $l_i:\mathbf{L}_i\to \mathbf{M}$ the inclusion, and $p_1:\mathbf{N}\to \mathbf{N}_1$ the projection. We conclude that there exists a unique $i$ such that $p_1\circ f\circ l_i\neq 0$. Set $\mathbf{M}_1=\mathbf{L}_i$ and $\mathbf{M}_2=\bigoplus_{j\neq i}\mathbf{L}_j$. Then $\mathbf{M}=\mathbf{M}_1\oplus \mathbf{M}_2$ and $f$ can be rewritten as $\mathbf{M}_1\oplus \mathbf{M}_2\xrightarrow{\tiny\begin{pmatrix}f_1&0\\ 0&f_2\end{pmatrix}}\mathbf{N}_1\oplus \mathbf{N}_2$. It turns out that $f_1$ and $f_2$ are epimorphisms. Now the result follows Proposition \ref{prop:structure-nil-ind} (5).
\end{proof}
\begin{corollary}\label{c:decom-seq}
Let $\mathbf{N}=\bigoplus_{i=1}^t\mathbf{N}_i$ with indecomposable direct summands $\mathbf{N}_i$, $1\leq i\leq t$. Every exact sequence $\epsilon$ ending at $\mathbf{N}$ of length $n+2$ has a decomposition
\[
\epsilon=\epsilon_1\oplus \cdots\oplus \epsilon_t,
\]
where $\epsilon_i$ is an exact sequence  of length $n+2$ ending at $\mathbf{N}_i$.
\end{corollary}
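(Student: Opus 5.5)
We argue by induction on the number $t$ of indecomposable summands of $\mathbf{N}$, and for fixed $t$ by peeling off one summand at a time, moving from right to left along $\epsilon$. Write
\[
\epsilon:\ \mathbf{L}\rightarrowtail \mathbf{M}_1\xrightarrow{\alpha_1}\cdots\xrightarrow{\alpha_{n-1}}\mathbf{M}_n\xrightarrow{\alpha_n}\mathbf{N}.
\]
It suffices to split $\epsilon=\epsilon'\oplus\epsilon''$ with $\epsilon'$ ending at $\mathbf{N}_1$ and $\epsilon''$ ending at $\mathbf{N}_2\oplus\cdots\oplus\mathbf{N}_t$; then induction applies to $\epsilon''$. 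Since $\mathbf{N}_1$ is indecomposable nilpotent, Proposition \ref{prop:structure-nil-ind}(1) gives $\mathbf{N}_1\cong\mathbf{I}_{[k,r]}$.

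The key step is a right-to-left splitting. Apply Lemma \ref{l:decom-epi} to the epimorphism $\alpha_n$ with $\mathbf{N}=\mathbf{N}_1\oplus\mathbf{N}'$, where $\mathbf{N}'=\bigoplus_{i\geq 2}\mathbf{N}_i$. This gives $\mathbf{M}_n=\mathbf{M}_n^{(1)}\oplus\mathbf{M}_n^{(2)}$ with $\alpha_n=\mathrm{diag}(f_1,f_2)$. Then
\[
\ker\alpha_n=\ker f_1\oplus\ker f_2 .
\]
Here $\ker f_1$ is a subrepresentation of the uniserial $\mathbf{I}_{[k,l]}$, so it is indecomposable or zero by Proposition \ref{prop:structure-nil-ind}(4). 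Exactness gives $\operatorname{im}\alpha_{n-1}=\ker\alpha_n$, so $\alpha_{n-1}$ corestricts to an epimorphism $\mathbf{M}_{n-1}\twoheadrightarrow\ker f_1\oplus\ker f_2$. Decompose $\ker f_2$ into indecomposables via Krull--Schmidt and apply Lemma \ref{l:decom-epi} once more, this time with the indecomposable summand $\ker f_1$ (if it is nonzero). This splits $\mathbf{M}_{n-1}=\mathbf{M}_{n-1}^{(1)}\oplus\mathbf{M}_{n-1}^{(2)}$ compatibly with the decomposition of $\ker\alpha_n$.

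Iterating, at each stage the kernel of the first-summand piece is either zero or indecomposable, so Lemma \ref{l:decom-epi} (which needs an indecomposable piece) always applies. If at some stage the kernel is zero, the remaining leftward terms of the first summand are $\mathbf{0}$. At the far left, $\operatorname{im}\alpha_0=\ker\alpha_1=K^{(1)}\oplus K^{(2)}$, and since $\alpha_0$ is injective, $\mathbf{L}$ decomposes accordingly as $\mathbf{L}^{(1)}\oplus\mathbf{L}^{(2)}$. Collecting the first-summand terms gives $\epsilon'$ and the rest gives $\epsilon''$. Each is exact, because kernels and images split along the diagonal maps.

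The main technical point is justifying that a morphism whose target is written as a direct sum with a prescribed compatible decomposition can indeed be rewritten diagonally. Concretely, the summands $\mathbf{M}_{j}^{(1)},\mathbf{M}_j^{(2)}$ must map into $\mathbf{M}_{j+1}^{(1)},\mathbf{M}_{j+1}^{(2)}$ respectively. This holds because $\alpha_j$ lands in $\ker\alpha_{j+1}$, which was already split compatibly, and Lemma \ref{l:decom-epi} provides exactly this diagonal form with respect to the chosen decomposition of the target. A second detail is that the target of the corestriction is a subrepresentation rather than all of $\mathbf{M}_{j+1}$. Each kernel piece lies inside the corresponding summand, so the resulting maps into $\mathbf{M}_{j+1}$ remain diagonal.
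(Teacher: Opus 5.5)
Your proposal is correct and follows the route the paper intends. The paper states the corollary without proof as an immediate consequence of Lemma \ref{l:decom-epi}, and your right-to-left iteration is that argument: each first-summand piece is some $\mathbf{I}_{[k',l']}$, so its kernel is zero or indecomposable and the lemma applies again (the paper carries out the same peeling explicitly in the proof that $\Ext^3$ vanishes). Your Krull--Schmidt decomposition of $\ker f_2$ is unnecessary, since Lemma \ref{l:decom-epi} places no indecomposability requirement on the complementary summand.
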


\subsection{Non-finiteness of \texorpdfstring{$\Ext^2(-,-)$}{Ext2}}
\begin{lemma}\label{lem:non-zero-extension-r}
    Let $k$ be a vertex of $\Delta_n$, and $s$ a positive integer. For a nonnegative integer $r$ such that $rn+s\geq 2$, let $\epsilon_r$ be the following exact sequence of length $4$:
   \[ \xymatrix{\epsilon_r:& \mathbf{I}_{[k+s,1]}\ \ar@{>->}[r]&\mathbf{I}_{[k+1,rn+s]}\ar[r]&\mathbf{I}_{[k,rn+s]}\ar@{->>}[r]&\mathbf{I}_{[k,1]}.}
    \]
    Assume that there is a commutative diagram of exact sequences
    \begin{equation}\label{diagram:comutaive-diag-morphism-in}
    \xymatrix{\delta: &\mathbf{I}_{[k+s,1]}\ \ar@{=}[d]\ar@{>->}[r]&\mathbf{M}\ar[d]\ar[r] &\mathbf{N}\ar[d]\ar@{->>}[r]&\mathbf{I}_{[k,1]}\ar@{=}[d]\\
    \epsilon_r:& \mathbf{I}_{[k+s,1]}\ \ar@{>->}[r]&\mathbf{I}_{[k+1,rn+s]}\ar[r]&\mathbf{I}_{[k,rn+s]}\ar@{->>}[r]&\mathbf{I}_{[k,1]}}
    \end{equation}
    or 
    \begin{equation}\label{diag:morphism-out}
\xymatrix{\delta:&\mathbf{I}_{[k+s,1]}\ \ar@{=}[d]\ar@{>->}[r]&\mathbf{M}\ar[r] &\mathbf{N}\ar@{->>}[r]&\mathbf{I}_{[k,1]}\ar@{=}[d]\\
    \epsilon_r:&\mathbf{I}_{[k+s,1]}\ \ar@{>->}[r]&\mathbf{I}_{[k+1,rn+s]}\ar[u]\ar[r]&\mathbf{I}_{[k,rn+s]}\ar[u]\ar@{->>}[r]&\mathbf{I}_{[k,1]}.}
    \end{equation}
    Then $\delta\cong\epsilon_r\oplus \gamma$, where $\xymatrix{\gamma:\mathbf{0}\,\ar@{>->}[r]&\mathbf{U}\ar[r]^{1_\mathbf{U}}&\mathbf{U}\ar@{->>}[r]&\mathbf{0}}$ for some $\mathbf{U}$.
\end{lemma}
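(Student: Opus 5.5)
We treat the two diagrams separately, but in both cases the plan is the same. First use Corollary \ref{c:decom-seq} and Lemma \ref{l:decom-epi} to split off from $\delta$ the indecomposable piece that carries $\mathbf{I}_{[k,1]}$. Then use the uniqueness statements in Proposition \ref{prop:structure-nil-ind} (5),(6) to identify that piece with $\epsilon_r$, and show that what remains is a sequence $\mathbf{0}\to\mathbf{U}\xrightarrow{\cong}\mathbf{U}'\to\mathbf{0}$, which is isomorphic to $\gamma$.

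\emph{Diagram (\ref{diagram:comutaive-diag-morphism-in}).} Write $\delta$ as $\mathbf{I}_{[k+s,1]}\xrightarrow{a}\mathbf{M}\xrightarrow{b}\mathbf{N}\xrightarrow{c}\mathbf{I}_{[k,1]}$, with vertical maps $g:\mathbf{M}\to\mathbf{I}_{[k+1,rn+s]}$ and $h:\mathbf{N}\to\mathbf{I}_{[k,rn+s]}$.

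Since $c$ is an epimorphism onto the indecomposable $\mathbf{I}_{[k,1]}$, Lemma \ref{l:decom-epi} gives $\mathbf{N}=\mathbf{N}_1\oplus\mathbf{N}_2$ with $\mathbf{N}_1\cong\mathbf{I}_{[k,l]}$ and $c=(c_1,0)$. Exactness gives $\im b=\ker c=\mathrm{rad}\,\mathbf{N}_1\oplus\mathbf{N}_2$, where $\mathrm{rad}\,\mathbf{N}_1\cong\mathbf{I}_{[k+1,l-1]}$. Applying Lemma \ref{l:decom-epi} to the epimorphism $\mathbf{M}\twoheadrightarrow\im b$ and the summand $\mathbf{I}_{[k+1,l-1]}$ gives $\mathbf{M}=\mathbf{M}_1\oplus\mathbf{M}_2$ with $\mathbf{M}_1\cong\mathbf{I}_{[k+1,l']}$, where $l'\ge l-1$. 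Moreover $b$ is diagonal, with $\mathbf{M}_2\twoheadrightarrow\mathbf{N}_2$.

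Next, $\ker b=\im a\cong\mathbf{I}_{[k+s,1]}$ is simple, so it lies in exactly one of $\mathbf{M}_1$, $\mathbf{M}_2$.

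\begin{itemize}
\item Suppose it lies in $\mathbf{M}_2$. Then $b|_{\mathbf{M}_1}$ is injective, so $l'=l-1$. The piece $\mathbf{I}_{[k+1,l-1]}\rightarrowtail\mathbf{I}_{[k,l]}\twoheadrightarrow\mathbf{I}_{[k,1]}$ then has zero left end. Now look at the right square: $h\circ b$ composed with the epimorphism $\mathbf{I}_{[k,rn+s]}\twoheadrightarrow\mathbf{I}_{[k,1]}$ must vanish. We want to derive a contradiction from this commutativity, using that $a$ followed by $g$ is the nonzero socle inclusion. It is cleaner to argue directly: the socle of $\mathbf{I}_{[k+1,rn+s]}$ is hit by $g\circ a$, and we follow it through $\mathbf{M}_2$ into $\mathbf{N}_2$.
\item Suppose it lies in $\mathbf{M}_1$. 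Then $\mathbf{M}_1\to\mathbf{N}_1$ has simple kernel equal to the socle, so $l'=l$. This forces $\mathbf{I}_{[k+1,l]}$ to have socle at vertex $k+s$, so $l\equiv s \pmod n$.
\end{itemize}

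In the second case the $\mathbf{M}_2,\mathbf{N}_2$ part is $\mathbf{0}\to\mathbf{M}_2\to\mathbf{N}_2\to\mathbf{0}$ with $b_2$ surjective and injective, hence an isomorphism; this is $\gamma$ after reidentifying $\mathbf{N}_2$ with $\mathbf{M}_2$. It remains to show $l=rn+s$. Restrict $h$ to $\mathbf{N}_1=\mathbf{I}_{[k,l]}$. Its composite with the epimorphism onto $\mathbf{I}_{[k,1]}$ is surjective, so by Proposition \ref{prop:structure-nil-ind}(6) $h|_{\mathbf{N}_1}$ is surjective and $l\ge rn+s$. Similarly, $g|_{\mathbf{M}_1}\circ a$ is injective, so $g|_{\mathbf{M}_1}$ is injective and $l\le rn+s$. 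Hence $h|_{\mathbf{N}_1}$ and $g|_{\mathbf{M}_1}$ are isomorphisms, and the first summand of $\delta$ is isomorphic to $\epsilon_r$.

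The first case is excluded for the same reason. $g\circ a$ is nonzero, while $g|_{\mathbf{M}_2}$ composed with $b$ lands in $\mathbf{N}_2$, and $h$ restricted to $\mathbf{N}_2$ must avoid the top of $\mathbf{I}_{[k,rn+s]}$ while still hitting $\mathbf{I}_{[k+s,1]}$ correctly. We use the socle argument from the proof of Lemma \ref{l:decom-epi}: two nonzero summands cannot both map nontrivially into an indecomposable module in a compatible way. Applied to $h$, with $h|_{\mathbf{N}_1}$ already surjective (hence nonzero), this forces $h|_{\mathbf{N}_2}=0$. Then $g\circ a=0$ because $b\circ a=0$ lands in $\mathbf{N}_2$ — more precisely, we compare with the socle of $\mathbf{I}_{[k+1,rn+s]}$ being sent injectively into $\mathbf{I}_{[k,rn+s]}$. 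Making this exclusion airtight is the main obstacle, together with handling the degenerate case $l=1$, where $\mathrm{rad}\,\mathbf{N}_1=0$. The hypothesis $rn+s\ge2$ guarantees that the middle map of $\epsilon_r$ is nonzero, and that is what we use there.

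\emph{Diagram (\ref{diag:morphism-out}).} This case is dual. The vertical maps now go $\mathbf{I}_{[k+1,rn+s]}\to\mathbf{M}$ and $\mathbf{I}_{[k,rn+s]}\to\mathbf{N}$. We decompose $\delta$ the same way. This time Proposition \ref{prop:structure-nil-ind}(6) is applied to the composites
\[
\mathbf{I}_{[k,rn+s]}\to\mathbf{N}\twoheadrightarrow\mathbf{I}_{[k,1]} \quad\text{and}\quad \mathbf{I}_{[k+s,1]}\rightarrowtail\mathbf{I}_{[k+1,rn+s]}\to\mathbf{M},
\]
after projecting onto the indecomposable summands $\mathbf{N}_1$ and $\mathbf{M}_1$. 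These projections are, respectively, surjective onto $\mathbf{N}_1$, giving $l\le rn+s$, and injective, giving $l\ge rn+s$. The remaining summand is again an isomorphism $\mathbf{M}_2\cong\mathbf{N}_2$, i.e.\ $\gamma$.
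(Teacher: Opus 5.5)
Your outline is the paper's: split $\delta$ with Lemma~\ref{l:decom-epi}, locate the simple kernel $\im a$, exclude the wrong location, then get $l=rn+s$ from Proposition~\ref{prop:structure-nil-ind}(6). Your treatment of the case $\im a\subseteq\mathbf{M}_1$ is correct.

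\textbf{The gap.} The exclusion of $\im a\subseteq\mathbf{M}_2$ is the real content of the lemma, and the argument you sketch for it fails. Write $\iota,\phi$ for the first two maps of $\epsilon_r$. From $h|_{\mathbf{N}_2}=0$ and $b(\mathbf{M}_2)\subseteq\mathbf{N}_2$, the middle square only gives $\phi\circ g|_{\mathbf{M}_2}=0$, i.e.\ $g(\mathbf{M}_2)\subseteq\ker\phi$. By exactness of $\epsilon_r$, $\ker\phi$ is exactly the socle of $\mathbf{I}_{[k+1,rn+s]}$. So the socle is killed by $\phi$, not sent injectively into $\mathbf{I}_{[k,rn+s]}$. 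This is perfectly consistent with $g\circ a=\iota\neq0$, so $g\circ a=0$ does not follow. The relation $h\circ b\circ a=\phi\circ g\circ a$ holds trivially, with both sides zero, and carries no information.

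\textbf{The missing step.} You need to use the other summand $\mathbf{M}_1\cong\mathbf{I}_{[k+1,l-1]}$; this is where $rn+s\geq2$ enters.
\begin{enumerate}
\item Since $h|_{\mathbf{N}_1}\colon\mathbf{I}_{[k,l]}\to\mathbf{I}_{[k,rn+s]}$ is surjective, it maps $\operatorname{rad}\mathbf{N}_1=b(\mathbf{M}_1)$ onto $\operatorname{rad}\mathbf{I}_{[k,rn+s]}\cong\mathbf{I}_{[k+1,rn+s-1]}\neq\mathbf{0}$.
\item Hence $\phi\circ g|_{\mathbf{M}_1}=h\circ b|_{\mathbf{M}_1}\neq0$, so $g|_{\mathbf{M}_1}\neq0$.
\item Also $g|_{\mathbf{M}_2}\neq0$, because $g\circ a\neq0$ and $\im a\subseteq\mathbf{M}_2$.
\item Both images are nonzero subrepresentations of the uniserial $\mathbf{I}_{[k+1,rn+s]}$, so both contain its socle element. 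That element then has two distinct nonzero preimages, contradicting $\mathbb{F}_1$-linearity of $g$.
\end{enumerate}
So the socle argument must be applied to $g$, not just to $h$. This is the paper's step ``$\beta_2\circ h\neq0$, hence $\alpha_2\neq0$ and $\alpha_1=0$''. Your worry about $l=1$ disappears if you establish $l\geq rn+s\geq2$ (surjectivity of $h|_{\mathbf{N}_1}$) \emph{before} applying Lemma~\ref{l:decom-epi} to the summand $\mathbf{I}_{[k+1,l-1]}$. You have this fact, but only use it later.

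\textbf{The second diagram.} Your paragraph there has the same hole. The claimed injectivity into $\mathbf{M}_1$ presupposes $\im a\subseteq\mathbf{M}_1$, and a priori only $l\leq rn+s$ is available, so the other case (including $l=1$) needs its own exclusion. Write $g',h'$ for the vertical maps. One way to exclude it:
\begin{enumerate}
\item $c\circ h'=\pi$ forces $h'$ into $\mathbf{N}_1$.
\item $g'$ has source generated by its top, so it factors through a single summand; since $g'\circ\iota=a$, that summand is $\mathbf{M}_2$.
\item Thus $b\circ g'$ lies in $\mathbf{N}_2$ and $h'\circ\phi$ lies in $\mathbf{N}_1$, so both are $0$.
\item Then $g'$ maps into the simple $\im a$. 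Since $rn+s\geq2$, the socle of $\mathbf{I}_{[k+1,rn+s]}$ lies in its radical, which gives $g'\circ\iota=0\neq a$.
\end{enumerate}
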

\begin{proof}
 Let us prove the case \eqref{diagram:comutaive-diag-morphism-in}, the case \eqref{diag:morphism-out} can be proved in a similar way. Assume that we are in the situation \eqref{diagram:comutaive-diag-morphism-in}.
  By Lemma \ref{l:decom-epi}, $\delta$ is either isomorphic to 
  \begin{equation}\label{seq:true}
      \xymatrix{\mathbf{I}_{[k+s,1]}\ \ar@{>->}[r]^{\scriptsize\begin{pmatrix}i\\0\end{pmatrix}}&\mathbf{\bar{M}}\oplus \mathbf{\bar{N}}\ar[r]^{\scriptsize\begin{pmatrix}f&0\\ 0&\id_{\mathbf{\bar{N}}}\end{pmatrix}}&\mathbf{I}_{[k,l]}\oplus \mathbf{\bar{N}}\ar@{->>}[r]^{\hspace{1em}\scriptsize\begin{pmatrix}\pi&0\end{pmatrix}}&\mathbf{I}_{[k,1]}} 
  \end{equation}
  or
  \begin{equation}\label{seq:false}
  \xymatrix{\mathbf{I}_{[k+s,1]}\ \ar@{>->}[r]^{\hspace{-1.5em}\scriptsize\begin{pmatrix}i\\0\end{pmatrix}}&\mathbf{\bar{M}}\oplus \mathbf{I}_{[k+1,l-1]}\ar[r]^{\scriptsize\begin{pmatrix}g&0\\ 0&h\end{pmatrix}}&\mathbf{\bar{N}}\oplus \mathbf{I}_{[k,l]} \ar@{->>}[r]^{\scriptsize\begin{pmatrix}0&\pi\end{pmatrix}}&\mathbf{I}_{[k,1]},}
  \end{equation}
  where $l\geq 1$, $\pi$ is the unique epimorphism from $\mathbf{I}_{[k,l]}$ to $\mathbf{I}_{[k,1]}$, $h$ is the unique monomorphism from $\mathbf{I}_{[k+1,l-1]}$ to $\mathbf{I}_{[k,l]}$, $i$ is a monomorphism, $g$ is an epimorphism and $\im f\cong \mathbf{I}_{[k+1,l-1]}$. 
  
  If $\delta$ is isomorphic to the exact sequence \eqref{seq:false}, then the commutative diagram \eqref{diagram:comutaive-diag-morphism-in} can be rewritten as
  \[
\xymatrix{\mathbf{I}_{[k+s,1]}\ar@{>->}[r]^{\hspace{-1.5em}\scriptsize\begin{pmatrix}i\\0\end{pmatrix}}\ar@{=}[d]&\mathbf{\bar{M}}\oplus \mathbf{I}_{[k+1,l-1]}\ar[d]^{\scriptsize\begin{pmatrix}\alpha_1&\alpha_2\end{pmatrix}}\ar[r]^{\scriptsize\begin{pmatrix}g&0\\ 0&h\end{pmatrix}}&\mathbf{\bar{N}}\oplus \mathbf{I}_{[k,l]}\ar[d]^{\scriptsize\begin{pmatrix}\beta_1&\beta_2\end{pmatrix}} \ar@{->>}[r]^{\scriptsize\begin{pmatrix}0&\pi\end{pmatrix}}&\mathbf{I}_{[k,1]}\ar@{=}[d]\\
  \mathbf{I}_{[k+s,1]}\ar@{>->}[r]&\mathbf{I}_{[k+1,rn+s]}\ar[r]&\mathbf{I}_{[k,rn+s]}\ar@{->>}[r]&\mathbf{I}_{[k,1]}.}
  \]
  According to Proposition \ref{prop:structure-nil-ind} (6), $\beta_2$ is an epimorphism, and $\beta_1=0$ by the definition of a morphism. Consequently, $l\geq rn+s\geq 2$. Note that $\beta_2\circ h\neq 0$ since  $\dim \im \beta_2\circ h=rn+s-1\geq 1$. It follows that $\alpha_2\neq 0$, and hence $\alpha_1=0$. This leads to a contradiction with the commutativity of the first square. 
  
  Thus, $\delta$ turns out to be isomorphic to the exact sequence \eqref{seq:true}. Again by the commutativity of the right square of \eqref{diagram:comutaive-diag-morphism-in} and Proposition \ref{prop:structure-nil-ind} (6), we know that $l\geq rn+s\geq 2$. In particular, $\mathbf{I}_{[k+1,l-1]}\neq \mathbf{0}$.

  We claim that $\mathbf{\bar{M}}$ is indecomposable. Otherwise, the short exact sequence \[\xymatrix{\mathbf{I}_{[k+s,1]}\ \ar@{>->}[r]^i&\mathbf{\bar{M}}\ar@{->>}[r]^{\hspace{-2.5em}\bar{f}}&\mathbf{I}_{[k+1,l-1]}\cong\im f}\] is split by Lemma \ref{l:decom-epi} again. It follows that the commutative diagram \eqref{diagram:comutaive-diag-morphism-in} can be rewritten as
  \[
  \xymatrix{
  \mathbf{I}_{[k+s,1]}\ar@{=}[d]\ar@{>->}[r]^{\hspace{-3.5em}\scriptsize\begin{pmatrix}i\\0\\0\end{pmatrix}}&\mathbf{I}_{[k+s,1]}\oplus \mathbf{I}_{[k+1,l-1]}\oplus \mathbf{\bar{N}}\ar[d]^{\scriptsize\begin{pmatrix} \mu_1&\mu_2&\mu_3\end{pmatrix}}\ar[rr]^{\hspace{2.5em}\scriptsize\begin{pmatrix}0&h&0\\0&0&\id_{\mathbf{\bar{N}}}\end{pmatrix}}&&\mathbf{I}_{[k,l]}\oplus \mathbf{\bar{N}}\ar[d]^{\scriptsize\begin{pmatrix}\nu_1&\nu_2\end{pmatrix}}\ar@{->>}[r]^{\hspace{1em}\scriptsize\begin{pmatrix}\pi&0\end{pmatrix}}&\mathbf{I}_{[k,1]}\ar@{=}[d]\\
    \mathbf{I}_{[k+s,1]}\ar@{>->}[r]&\mathbf{I}_{[k+1,rn+s]}\ar[rr]&&\mathbf{I}_{[k,rn+s]}\ar@{->>}[r]&\mathbf{I}_{[k,1]}.
  }
  \] 
  According to Proposition \ref{prop:structure-nil-ind} (6), $\nu_1$ is an epimorphism, and hence $\nu_2=0$. Again, the image $\im \nu_1\circ h\neq 0$.
  By the commutativity of the middle square, we conclude that $\mu_2\neq 0$ and $\mu_1=\mu_3=0$. This leads to a contradiction with the commutativity of the first square. 

  We have proved that $\mathbf{\bar{M}}$ is indecomposable, it turns out that $\mathbf{\bar{M}}\cong \mathbf{I}_{[k+1,l]}$ and $l\equiv s (\mod n)$. Consequently, $l=tn+s$ for some $t\geq r$. It remains to show that $t=r$.
  Now,  the commutative diagram \eqref{diagram:comutaive-diag-morphism-in} can be rewritten as
  \[
  \xymatrix{
  \mathbf{I}_{[k+s,1]}\ar@{=}[d]\ar@{>->}[r]^{\hspace{-2.5em}\scriptsize\begin{pmatrix}i\\0\end{pmatrix}}&\mathbf{I}_{[k+1,tn+s]}\oplus \mathbf{\bar{N}}\ar[d]^{\scriptsize\begin{pmatrix} \eta_1&\eta_2\end{pmatrix}}\ar[rr]^{\scriptsize\begin{pmatrix}f&0\\0&\id_{\mathbf{\bar{N}}}\end{pmatrix}}&&\mathbf{I}_{[k,tn+s]}\oplus \mathbf{\bar{N}}\ar[d]^{\scriptsize\begin{pmatrix}\nu_1&\nu_2\end{pmatrix}}\ar@{->>}[r]^{\hspace{1em}\scriptsize\begin{pmatrix}\pi&0\end{pmatrix}}&\mathbf{I}_{[k,1]}\ar@{=}[d]\\
    \mathbf{I}_{[k+s,1]}\ar@{>->}[r]&\mathbf{I}_{[k+1,rn+s]}\ar[rr]&&\mathbf{I}_{[k,rn+s]}\ar@{->>}[r]&\mathbf{I}_{[k,1]}.
  }
  \]
  A similar discussion shows that $\eta_1\neq 0$ and $\eta_2=0$. If $t>r$, then $\eta_1\circ i=0$, which contradicts to the commutativity of the left square. Therefore, $t=r$, and we are done.
\end{proof}
\begin{theorem}
    For any pair of vertices $j$ and $k$, $\Ext^2(\mathbf{I}_{[j,1]},\mathbf{I}_{[k,1]})\cong \mathbb{N}$, where $\mathbb{N}$ is the set of nonnegative integers viewed as a pointed set with the base point $0$.
\end{theorem}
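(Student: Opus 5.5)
Write $\Ext^2(\mathbf{I}_{[j,1]},\mathbf{I}_{[k,1]})$ with the sequences starting at $\mathbf{I}_{[k+s,1]}$ and ending at $\mathbf{I}_{[k,1]}$; that is, relabel so that the first term is $\mathbf{I}_{[k+s,1]}$ with $1\le s\le n$ and $k+s\equiv j$ (mod $n$). The plan is to show that the nonzero classes are exactly the $[\epsilon_r]$ for $r\ge 0$ with $rn+s\ge 2$. These $[\epsilon_r]$ are pairwise distinct and nonzero, and together with $0$ they give a pointed set in bijection with $\mathbb{N}$. In the degenerate case $n=1$, $s=1$, the index $r=0$ is excluded, and reindexing still yields a countably infinite set, hence $\mathbb{N}$ as pointed sets.

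\emph{Step 1: classes $[\epsilon_r]$ are distinct and nonzero.} By Lemma \ref{lem:non-zero-extension-r}, any $\delta$ related to $\epsilon_r$ by a single step $\delta\leadsto\epsilon_r$ or $\epsilon_r\leadsto\delta$ is isomorphic to $\epsilon_r\oplus\gamma$ with $\gamma$ trivial. Next we check that the same conclusion holds when $\epsilon_r$ is replaced by $\epsilon_r\oplus\gamma$. Via Corollary \ref{c:decom-seq}, the map to or from $\epsilon_r\oplus\gamma$ restricts to a diagram involving $\epsilon_r$, because the $\gamma$ part has zero end terms. Induction along any zigzag of $\leadsto$ steps then shows that every member of the equivalence class of $\epsilon_r$ has the form $\epsilon_r\oplus\gamma$. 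It follows that $\epsilon_{r'}$ with $r'\ne r$ is not in this class, since the middle terms differ by dimension or Krull--Schmidt. The zero sequence is also not in the class, because its end-maps are identities on nonzero objects, which is impossible in $\epsilon_r\oplus\gamma$.

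\emph{Step 2: every extension is $0$ or some $[\epsilon_r]$.} Take $\delta:\mathbf{I}_{[k+s,1]}\rightarrowtail\mathbf{M}\to\mathbf{N}\twoheadrightarrow\mathbf{I}_{[k,1]}$. By Lemma \ref{l:decom-epi}, we can write $\mathbf{N}=\mathbf{I}_{[k,l]}\oplus\bar{\mathbf{N}}$ with $\pi$ on the first summand. Then $\ker\pi=\mathbf{I}_{[k+1,l-1]}$ is the image of $\mathbf{M}$, and $\bar{\mathbf{N}}$ is the image of the remaining part of $\mathbf{M}$. As in the proof of Lemma \ref{lem:non-zero-extension-r}, this splits off a trivial summand $\gamma$ (possibly after comparing to a smaller sequence via a morphism). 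We thus reduce to $\mathbf{I}_{[k+s,1]}\rightarrowtail\bar{\mathbf{M}}\twoheadrightarrow\mathbf{I}_{[k+1,l-1]}$ with the composite into $\mathbf{I}_{[k,l]}$.

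There are two cases. If $l=1$ or the short sequence splits, so that $\bar{\mathbf{M}}=\mathbf{I}_{[k+s,1]}\oplus\mathbf{I}_{[k+1,l-1]}$, we build a morphism to the zero sequence $\mathbf{I}_{[k+s,1]}=\mathbf{I}_{[k+s,1]}\to\mathbf{0}\to\mathbf{I}_{[k,1]}=\mathbf{I}_{[k,1]}$. This needs intermediate steps: we first map to the sequence $\mathbf{I}_{[k+s,1]}\rightarrowtail\mathbf{I}_{[k+s,1]}\to\mathbf{I}_{[k,1]}\twoheadrightarrow\mathbf{I}_{[k,1]}$, which requires a zigzag since the outer maps must be identities. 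This part is fiddly, and I would first try the sequence $\mathbf{I}_{[k+s,1]}\oplus\mathbf{I}_{[k+1,l-1]}\to\mathbf{I}_{[k,l]}$ mapping onto $\mathbf{I}_{[k+s,1]}\to\mathbf{I}_{[k,1]}$ through quotients. Otherwise $\bar{\mathbf{M}}$ is indecomposable, so $\bar{\mathbf{M}}\cong\mathbf{I}_{[k+1,l]}$ with $l\equiv s$, i.e. $l=rn+s$, and $\delta\cong\epsilon_r\oplus\gamma\leadsto\epsilon_r$ by projecting off $\gamma$.

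The main obstacle I expect is the split case of Step 2: producing an explicit zigzag of $\leadsto$ relations that connects a split-type sequence to the zero extension, given that the outer maps must stay the identity. I would handle it by truncating $\mathbf{I}_{[k,l]}\to\mathbf{I}_{[k,1]}$ and $\mathbf{I}_{[k+1,l-1]}\to\mathbf{0}$ componentwise, checking exactness of each intermediate sequence.
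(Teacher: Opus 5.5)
Your Step 1 is sound, and in fact more careful than the paper, which cites Lemma \ref{lem:non-zero-extension-r} without dealing with zigzags. Composing with the inclusion $\epsilon_r\to\epsilon_r\oplus\gamma$ or the projection $\epsilon_r\oplus\gamma\to\epsilon_r$ (both have identity ends) does reduce each step of a zigzag to the lemma. A minor slip: $r=0$ is excluded whenever $s=1$, for every $n$, because $rn+s\ge 2$.

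The gap is in Step 2. After writing $\mathbf{N}=\mathbf{I}_{[k,l]}\oplus\bar{\mathbf{N}}$, repeated use of Lemma \ref{l:decom-epi} splits $\mathbf{M}$ into three kinds of summand: one covering $\mathbf{I}_{[k+1,l-1]}$, summands covering $\bar{\mathbf{N}}$, and possibly a summand sent to zero. The one-dimensional kernel $\mathbf{I}_{[k+s,1]}$ lies in exactly one of these. You tacitly assume it lies in the first; only then does a trivial $\gamma$ split off and your reduction to $\mathbf{I}_{[k+s,1]}\rightarrowtail\bar{\mathbf{M}}\twoheadrightarrow\mathbf{I}_{[k+1,l-1]}$ apply. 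The other configuration is the paper's \eqref{seq:false}. Lemma \ref{lem:non-zero-extension-r} excluded it only because a comparison map to $\epsilon_r$ was given; here nothing excludes it, and it cannot be handled by one morphism.

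For example, take $n=1$ and $J_m=\mathbf{I}_{[1,m]}$. Let
$\delta: J_1\rightarrowtail J_2\oplus J_1\xrightarrow{g\oplus h}J_1\oplus J_2\xrightarrow{(0,\pi)}J_1$,
where the first map is the socle inclusion into $J_2$. There is no morphism $\delta\to 0$ with identity ends, because every morphism $J_2\to J_1$ kills the socle. There is none $0\to\delta$ either, because every morphism $J_1\to J_2$ lands in the socle, which is $\ker\pi$. So your phrase ``comparing to a smaller sequence via a morphism'' cannot settle this case.

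The missing idea is the paper's use of Lemma \ref{l:equiv=0}. Whenever $\delta\not\cong\epsilon_r\oplus\gamma$, it is a direct sum of two pieces, after absorbing trivial pieces into $B$:
$A:\mathbf{I}_{[k+s,1]}\rightarrowtail\mathbf{V}\to\mathbf{W}\twoheadrightarrow\mathbf{0}$ and $B:\mathbf{0}\rightarrowtail\mathbf{X}\to\mathbf{Y}\twoheadrightarrow\mathbf{I}_{[k,1]}$.
Each piece is zero because $\Ext^2(\mathbf{0},-)=0=\Ext^2(-,\mathbf{0})$. Explicitly, let $0_A,0_B$ be the zero sequences with the same end terms as $A$ and $B$. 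Then $0_A\leadsto A$ via the given monomorphism and $B\leadsto 0_B$ via the given epimorphism. Hence $0_A\oplus B\leadsto\delta$ and $0_A\oplus B\leadsto 0_A\oplus 0_B=0$, a genuine two-step zigzag. This covers $l=1$, your split case and the omitted case uniformly.

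Conversely, the split case you flag as the main obstacle is not one. In $\Ext^2$ the zero element is $\mathbf{I}_{[k+s,1]}\xrightarrow{1}\mathbf{I}_{[k+s,1]}\xrightarrow{0}\mathbf{I}_{[k,1]}\xrightarrow{1}\mathbf{I}_{[k,1]}$; there is no extra $\mathbf{0}$ in length $4$. So the sequence you planned to map to is already zero. Your proposed map (projection onto $\mathbf{I}_{[k+s,1]}$, then $\pi$) is a single morphism with identity ends, and the middle square commutes because $\pi\circ h=0$. The zigzag is needed in the case you omitted, not in the split case.
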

\begin{proof} 
We may assume that $j=k+s$ for some $0<s\leq n$.   We show that the nonzero elements in $\Ext^2(\mathbf{I}_{[k+s,1]},\mathbf{I}_{[k,1]})$  are in bijection with $\mathbb{N}$.

Let us first assume that $s\geq 2$.
By Lemma \ref{lem:non-zero-extension-r}, for any nonnegative integer $r$, $[\epsilon_r]$ is nonzero in $\Ext^2(\mathbf{I}_{[k+s,1]},\mathbf{I}_{[k,1]})$, and $[\epsilon_r]\neq [\epsilon_t]$ whenever $r\neq t$. It remains to show that for any nonzero $[\delta]\in \Ext^2(\mathbf{I}_{[k+s,1]},\mathbf{I}_{[k,1]})$, one has $[\delta]=[\epsilon_r]$ for some $r\geq 0$. Otherwise, by applying Lemma \ref{l:decom-epi}, $\delta$ is a direct sum of the following exact sequences of length $4$:
    \[
    \xymatrix{\mathbf{I}_{[k+s,1]}\,\ar@{>->}[r]&\mathbf{V}\ar[r]&\mathbf{W}\ar@{->>}[r]&\mathbf{0},}  \xymatrix{\mathbf{0}\, \ar@{>->}[r]&\mathbf{X}\ar[r]&\mathbf{Y}\ar@{->>}[r]&\mathbf{I}_{[k,1]}}
    \]
    and
    \[
    \xymatrix{\mathbf{0}\,\ar@{>->}[r]&\mathbf{Z}\ar[r]^{1_\mathbf{Z}}&\mathbf{Z}\ar@{->>}[r]&\mathbf{0}.}
    \]
    It turns out that $[\delta]=0$. Hence, $\Ext^2(\mathbf{I}_{[k+s,1]},\mathbf{I}_{[k,1]})=\{0,[\epsilon_r]\mid r\geq 0\}\cong \mathbb{N}$.

    Now assume that $s=1$. Again by Lemma \ref{lem:non-zero-extension-r}, for any positive integer $r$, $[\epsilon_r]\neq 0$ in $\Ext^2(\mathbf{I}_{[k+1,1]},\mathbf{I}_{[k,1]})$, and $[\epsilon_r]\neq [\epsilon_t]$ whenever $r\neq t$. Similar, one can show that for any nonzero $[\delta]\in \Ext^2(\mathbf{I}_{[k+1,1]},\mathbf{I}_{[k,1]})$, one has $[\delta]=[\epsilon_r]$ for some positive integer $r$. It follows that $\Ext^2(\mathbf{I}_{[k+1,1]},\mathbf{I}_{[k,1]})=\{0,[\epsilon_r]\mid r\geq 1\}\cong \mathbb{N}$.
\end{proof}
\subsection{The vanishing of \texorpdfstring{$\Ext^3(-,-)$}{Ext3}}

\begin{proposition}
    For any $\mathbf{M},\mathbf{N}\in \opname{rep}(\Delta_n,\mathbb{F}_1)_{\rm nil}$, $\Ext^3(\mathbf{M},\mathbf{N})=0$.
\end{proposition}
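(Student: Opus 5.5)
We want to show every exact sequence
\[
\xymatrix{\delta:&\mathbf{N}\ \ar@{>->}[r]^{\alpha_0}&\mathbf{E}_1\ar[r]^{\alpha_1}&\mathbf{E}_2\ar[r]^{\alpha_2}&\mathbf{E}_3\ar@{->>}[r]^{\alpha_3}&\mathbf{M}}
\]
is equivalent to the zero sequence.

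\textbf{Reduction to indecomposable $\mathbf{M}$.} By Corollary \ref{c:decom-seq}, $\delta$ splits as a direct sum of sequences ending at the indecomposable summands of $\mathbf{M}$. By Lemma \ref{l:equiv=0}(1), it suffices to treat each summand. So we may assume $\mathbf{M}=\mathbf{I}_{[k,r]}$. Symmetrically, the dual of Lemma \ref{l:decom-epi} (splitting off a monomorphism from an indecomposable) should let us also assume $\mathbf{N}$ is indecomposable. We will not rely on this second reduction unless it is needed.

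\textbf{Normalizing the right end.} Apply Lemma \ref{l:decom-epi} to $\alpha_3$. This gives $\mathbf{E}_3=\mathbf{I}_{[k,l]}\oplus\mathbf{E}_3'$, where $\alpha_3$ is the canonical epimorphism on the first summand and $\alpha_3|_{\mathbf{E}_3'}$ is an epimorphism onto $\mathbf{0}$, that is, zero. Then $\im\alpha_2=\ker\alpha_3=\mathbf{I}_{[k+r,l-r]}\oplus\mathbf{E}_3'$. Applying Lemma \ref{l:decom-epi} to $\mathbf{E}_2\twoheadrightarrow\im\alpha_2$ decomposes $\mathbf{E}_2$ compatibly. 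Continuing leftward, $\delta$ becomes a direct sum of sequences of three kinds:
\begin{itemize}
\item pieces $\mathbf{0}\to\mathbf{U}\xrightarrow{=}\mathbf{U}\to\mathbf{0}$ in two adjacent middle positions;
\item pieces starting at $\mathbf{N}$ and ending at $\mathbf{0}$;
\item one ``connected'' piece through $\mathbf{I}_{[k,l]}$.
\end{itemize}

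\textbf{Killing the sequence.} The key step is to show that the connected piece can be moved, via a zigzag of $\leadsto$ relations, to one that visibly splits. We first compare it with a normal form. Setting $\mathbf{K}_1=\im\alpha_1$ and $\mathbf{K}_2=\im\alpha_2$, we factor $\delta$ as the Yoneda splice of $\mathbf{N}\rightarrowtail\mathbf{E}_1\twoheadrightarrow\mathbf{K}_1$, then $\mathbf{K}_1\rightarrowtail\mathbf{E}_2\twoheadrightarrow\mathbf{K}_2$, then $\mathbf{K}_2\rightarrowtail\mathbf{E}_3\twoheadrightarrow\mathbf{M}$. Each $\mathbf{K}_i$ is a direct sum of uniserials. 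The plan is to enlarge the middle term. Replace $\mathbf{E}_2$ by a bigger uniserial module, for example $\mathbf{I}_{[\cdot, \cdot+cn]}$ with $c$ large, mapping onto $\mathbf{E}_2$ or receiving a map from it. This produces a chain of $\leadsto$ relations, in the spirit of the diagrams \eqref{diagram:comutaive-diag-morphism-in} and \eqref{diag:morphism-out} but with an extra term. Because the sequence has four middle positions, there is enough room to insert a morphism into a sequence whose middle map factors through $\mathbf{0}$. Concretely, we aim for a diagram
\[
\delta\leadsto\delta'\quad\text{and}\quad 0\leadsto\delta',
\]
where $\delta'$ has $\mathbf{E}_2'$ decomposing as $\mathbf{K}_1'\oplus\mathbf{K}_2'$ with $\alpha_1',\alpha_2'$ diagonal. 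Then $\delta'$ is a direct sum of a sequence ending at $\mathbf{0}$, a sequence starting at $\mathbf{0}$, and identity pieces. Each of these is $0$ in $\Ext^3$, by Lemma \ref{l:equiv=0}(2) and the argument used in the proof of the Theorem. By Lemma \ref{l:equiv=0}(1), $[\delta]=0$.

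\textbf{Main obstacle.} The hard step is constructing the comparison diagram. We must find $\mathbf{E}_1',\mathbf{E}_2',\mathbf{E}_3'$ and morphisms commuting with the given ones, and check exactness, which is delicate over $\mathbb{F}_1$ since images of non-injective maps behave rigidly (Proposition \ref{prop:structure-nil-ind}(5),(6)).

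I would first try the case $\mathbf{M},\mathbf{N}$ simple. There the $\Ext^2$ classes $\epsilon_r$ are explicit. The aim is to show that any $3$-extension is a splice of an $\epsilon_r$ with a $1$-extension, and that the splice becomes trivial once the middle uniserial is lengthened by a full cycle $n$.

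The general indecomposable case would then follow by induction on length, using the composition series of Proposition \ref{prop:structure-nil-ind}(3).
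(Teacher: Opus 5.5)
\textbf{There is a genuine gap.} Your reduction to indecomposable $\mathbf{M}=\mathbf{I}_{[k,r]}$ and your use of Lemma~\ref{l:decom-epi} to normalize the right end match the paper. But the proof stops exactly where the content lies. You call the construction of the comparison diagram the ``main obstacle'' and never carry it out, so as written nothing shows $[\delta]=0$. The routes you sketch do not close this:
\begin{itemize}
\item Lengthening a uniserial by $cn$ is carried over from the $\Ext^2$ computation and plays no role here.
\item The claim that, for simple $\mathbf{M},\mathbf{N}$, every $3$-extension is a splice of some $\epsilon_r$ with a $1$-extension is unsupported. The intermediate kernels $\im\alpha_1$ and $\im\alpha_2$ are in general non-simple uniserials, so neither $2$-extension factor is one of the $\epsilon_r$.
\item The final ``induction on length via composition series'' has no mechanism behind it. The $\Ext^n$ here are only pointed sets, and nothing in this non-additive setting supplies the long exact sequences such an induction would need.
\end{itemize}

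The missing idea is a single explicit morphism of sequences, valid for every $\mathbf{N}$ and every indecomposable $\mathbf{M}$ at once. Take $\epsilon$ primitive. Lemma~\ref{l:decom-epi} and Proposition~\ref{prop:structure-nil-ind}(5) give $\mathbf{E}_3\cong\mathbf{I}_{[k,l]}$, $\ker\alpha_3\cong\mathbf{I}_{[k+r,l-r]}$, and $\mathbf{E}_2\cong\mathbf{I}_{[k+r,m]}$ with $m\geq l-r$. Moreover, $\alpha_2$ is the composite $\mathbf{I}_{[k+r,m]}\twoheadrightarrow\mathbf{I}_{[k+r,l-r]}\rightarrowtail\mathbf{I}_{[k,l]}$. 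Now glue $\mathbf{E}_2$ and $\mathbf{M}$ into the uniserial $\mathbf{I}_{[k,m+r]}$ and set
\[
\delta_0:\quad \mathbf{N}\xrightarrow{\;=\;}\mathbf{N}\xrightarrow{\;0\;}\mathbf{I}_{[k+r,m]}\rightarrowtail\mathbf{I}_{[k,m+r]}\twoheadrightarrow\mathbf{I}_{[k,r]}.
\]
Take as vertical maps $1_{\mathbf{N}}$, $\alpha_0$, $1$, the epimorphism $\mathbf{I}_{[k,m+r]}\twoheadrightarrow\mathbf{I}_{[k,l]}$, and $1$. These give $\delta_0\leadsto\epsilon$:
\begin{itemize}
\item the second square commutes because $\alpha_1\alpha_0=0$;
\item the third commutes because the composite $\mathbf{I}_{[k+r,m]}\rightarrowtail\mathbf{I}_{[k,m+r]}\twoheadrightarrow\mathbf{I}_{[k,l]}$ is exactly $\alpha_2$;
\item the fourth commutes by uniqueness of the epimorphism $\mathbf{I}_{[k,m+r]}\twoheadrightarrow\mathbf{I}_{[k,r]}$.
\end{itemize}
The sequence $\delta_0$ is the direct sum of $\mathbf{N}=\mathbf{N}\to\mathbf{0}\to\mathbf{0}\to\mathbf{0}$ and $\mathbf{0}\to\mathbf{0}\to\mathbf{I}_{[k+r,m]}\rightarrowtail\mathbf{I}_{[k,m+r]}\twoheadrightarrow\mathbf{I}_{[k,r]}$. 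Lemma~\ref{l:equiv=0} therefore gives $[\delta_0]=0$, and hence $[\epsilon]=0$. No zigzag, full-cycle lengthening, or reduction to simples is needed. The point is that a $3$-extension has one more middle term than a $2$-extension, and this makes room for the zero map $\mathbf{N}\to\mathbf{E}_2$, which is unavailable for $\Ext^2$.
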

\begin{proof}
    By Lemma \ref{l:equiv=0} and Corollary \ref{c:decom-seq}, it suffices to assume that $\mathbf{M}$ is indecomposable, say $\mathbf{M}=\mathbf{I}_{[k,r]}$. Let 
    \[
    \xymatrix{\epsilon:&\mathbf{N}\,\ar@{>->}[r]^{f_0}&\mathbf{N}_1\ar[r]^{f_1}&\mathbf{N}_2\ar[r]^{f_2}&\mathbf{N}_3\ar@{->>}[r]^{f_3}&\mathbf{M}}
    \]
    be an arbitrary exact sequence in $\mathbb{E}^3(\mathbf{M},\mathbf{N})$. We may assume that $\epsilon$ is primitive. According to Lemma \ref{l:decom-epi} and Proposition \ref{prop:structure-nil-ind} (5), we conclude that $\mathbf{N}_3\cong \mathbf{I}_{[k,l]}$ for some $l\geq r$, and $\ker f_3\cong \mathbf{I}_{[k+r,l-r]}$. Applying Lemma \ref{l:decom-epi} and Proposition \ref{prop:structure-nil-ind} again, we obtain that $\mathbf{N}_2\cong \mathbf{I}_{[k+r,m]}$ for some $m\geq l-r$, and $f_2$ factors as $\mathbf{I}_{[k+r,m]}\twoheadrightarrow \mathbf{I}_{[k+r,l-r]} \rightarrowtail \mathbf{I}_{[k,l]}$. Consequently, We have the following commutative diagram of exact sequences:
    \[\xymatrix{\delta:&\mathbf{N}\ar@{=}[d]\ar@{=}[r]&\mathbf{N}\ar[d]^{f_0}\ar[r]^-0 &\mathbf{I}_{[k+r,m]}\ \ar@{=}[d]\ar@{>->}[r]&\mathbf{I}_{[k,m+r]}\ar@{->>}[d]\ar@{->>}[r]&\mathbf{I}_{[k,r]}\ar@{=}[d]\\
\epsilon:&
\mathbf{N}\ \ar@{>->}[r]^{f_0}&\mathbf{M}_1\ar[r]^-{f_1}&\mathbf{I}_{[k+r,m]}\ar[r]^{f_2}&\mathbf{I}_{[k,l]}\ar@{->>}[r]^{f_3}&\mathbf{I}_{[k,r]}.
}
\]
It is routine to see that $[\delta]=0$, and hence $[\epsilon]=0$.
\end{proof}

\bibliographystyle{plain}
\bibliography{ref}
\end{document}